\DeclareMathOperator{\dist}{dist}
\DeclareMathOperator{\loc}{loc}
\numberwithin{equation}{section}
\newtheorem{Def}{Definition}[section]
\newtheorem{Lemma}[Def]{Lemma}
\newtheorem{Theorem}[Def]{Theorem}
\newtheorem{Prop}[Def]{Proposition}
\newtheorem{Remark}[Def]{Remark}
\newcommand{\R}{\ensuremath{\mathbb{R}}}
\newcommand{\C}{\ensuremath{\mathbb{C}}}
\newcommand{\N}{\ensuremath{\mathbb{N}}}
\newcommand{\Eins}{\ensuremath{\mathds{1}}}
\newcommand{\Pb}{\ensuremath{\mathbb{P}}}
\newcommand{\calS}{\ensuremath{\mathcal{S}}}
\newcommand{\calE}{\ensuremath{\mathcal{E}}}
\newcommand{\calM}{\ensuremath{\mathcal{M}}}
\newcommand{\calR}{\ensuremath{\mathcal{R}}}
\newcommand{\calA}{\ensuremath{\mathcal{A}}}
\newcommand{\calT}{\ensuremath{\mathcal{T}}}
\newcommand{\norm}[1]{\left\|#1\right\|}
\newcommand{\abs}[1]{\left|#1\right|}
\newcommand{\nnorm}[1]{\|#1\|}
\let\div=\relax
\DeclareMathOperator{\div}{div}
\DeclareMathOperator{\el}{ell}
\DeclareMathOperator{\uloc}{uloc}
\newcommand{\ldot}{\,.\,}
\begin{document}

\title[A new proof for the well-posedness of NSE in $BMO^{-1}$]{A new proof for Koch and Tataru's result on the well-posedness of Navier-Stokes equations in $BMO^{-1}$}

\date{\today}

\author{Pascal Auscher}
\address{Pascal Auscher - Univ. Paris-Sud, laboratoire de Math\'ematiques, UMR 8628 du CNRS, F-91405 {\sc Orsay}} 
\email{pascal.auscher@math.u-psud.fr}

\author{Dorothee Frey}
\address{Dorothee Frey - Mathematical Sciences Institute, John Dedman Building, Australian National University, Canberra ACT 0200, Australia}
\email{dorothee.frey@anu.edu.au}

\subjclass[2010]{
35Q10,  
76D05, 
42B37, 
42B35} 

\keywords{Navier-Stokes equations; tent spaces; maximal regularity; 
Hardy spaces.}

\date{\today}

\begin{abstract} 
We give a new proof of a well-known result of Koch and Tataru on the well-posedness of Navier-Stokes equations in $\R^n$ with small initial data in $BMO^{-1}(\R^n)$. 
The proof is formulated operator theoretically and does not make use of self-adjointness of the Laplacian. 
\end{abstract}

\maketitle

\section{Introduction}

In \cite{KochTataru}, it was shown that the incompressible Navier-Stokes equations in $\R^n$ are well-posed for small initial data in $BMO^{-1}(\R^n)$. 
The result was a breakthrough, and it is believed to be  best possible, in the sense that $BMO^{-1}(\R^n)$ is the largest possible space with the scaling of $L^n(\R^n)$ where the incompressible Navier-Stokes equations are proved to be well-posed. Ill-posedness  is shown  in the largest possible space $B^{-1}_{{\infty,\infty}}(\R^n)$ in \cite{BourgainPavlovic}, and in a space between $BMO^{-1}(\R^n)$ and $B^{-1}_{{\infty,\infty}}(\R^n)$ in \cite{Yoneda}. See also some counter-examples of this type in \cite{AT}.  \\

The proof in  \cite{KochTataru} reduces to establishing the boundedness of a bilinear operator. This  proof has two main ingredients: bounds coming from the representation of the Laplacian (such as the estimates for the Oseen kernel) and, in the crucial step,   self-adjointness of the Laplacian. \\

Our new proof is rather based on operator theoretical arguments with particular emphasis on use of tent spaces, maximal regularity operators and Hardy spaces. 
 In particular, we do not make use of self-adjointness of the Laplacian. Let us mention that  our techniques  have generalisations to rougher operators, thanks to recent work on maximal regularity on tent spaces (cf. \cite{AMP} and \cite{AKMP}) and Hardy spaces associated with (bi-)sectorial operators (cf. \cite{ADM}, \cite{AMR}, \cite{HM}, \cite{HMMc} and followers). 
Using those results, it is possible to adapt our new proof to operators whose semigroup only satisfies bounds of non-pointwise type. 
This opens up the way to possible generalisations for Navier-Stokes equations on rougher domains and in other type of geometry (cf. \cite{Taylor}, \cite{MitreaTaylor}, \cite{MitreaMonniaux1}, \cite{MitreaMonniaux2} for Lipschitz domains in Riemannian manifolds, and \cite{BahouriGallagher} on the Heisenberg group), geometric flows (cf. \cite{KochLamm}),
or  other semilinear parabolic equations of a similar structure, but for rougher domains or operators (cf. \cite{MarchandLemarie} for dissipative quasi-geostrophic equations, and \cite{Giga}, \cite{HaakKunstmann} for abstract formulations of parabolic equations with quadratic nonlinearity). \\
Let us further mention potential applications to stochastic Navier-Stokes equations (cf. e.g. \cite{LiuRoeckner} and the references therein). The maximal regularity operators on tent spaces we are relying on in our proof, have proven useful already for other stochastic differential equations (cf. \cite{AvNP}).\\

Consider the incompressible Navier-Stokes equations in $\R^n \times \R^+$
\begin{align*}\tag{NSE} \label{NSE}
\left\{ \begin{array}{rl}
u_t + (u \cdot \nabla) u - \Delta u +\nabla p  \!\! &= \; 0 \\
\div u \!\! &= \;0 \\
u(0,\,.\,) \!\! &= \;u_0,
 \end{array} \right.
\end{align*}
where $u$ is the velocity and $p$ the pressure. As usual, the pressure term can be eliminated by applying the Leray projection $\Pb$.  It is known from \cite{FLRT} that the differential Navier-Stokes equations are equivalent to their integrated counterpart 
\begin{align*} 
 \left\{ \begin{array}{rl} 
 u(t,\ldot) \!\! &= \; e^{t\Delta}u_0 - \int_0^t e^{(t-s)\Delta} \Pb \div (u(s,\ldot) \otimes u(s,\ldot)) \,ds \\
 \div u_{0} \!\! &= \; 0
 \end{array} \right.
\end{align*}
under an assumption of uniform  local square integrability of $u$. (In fact, under such a control on $u$, most possible formulations of the Navier-Stokes equations are equivalent, as shown by the nice note of Dubois \cite{Dubois}.) 
Using the Picard contraction principle, matters reduce to showing that the bilinear operator $B$, defined by
\begin{align} \label{Def-BilinearOp}
	B(u,v)(t,\ldot):=\int_0^t e^{(t-s)\Delta} \Pb \div ((u \otimes v(s,\ldot))\,ds,
\end{align}
is bounded on an appropriately defined admissible path space to which the free evolution $e^{t\Delta} u_{0}$ belongs. This is what we reprove with an argument  based on boundedness of singular integrals like operators on parabolically scaled tent spaces.

\section{New Proof}
\label{section:newProof}

We use the following tent spaces.

\begin{Def}
The tent space $T^{1,2}(\R^{n+1}_+)$ is defined as the space of all functions $F \in L^2_{\loc}(\R^{n+1}_+)$ such that 
\[	
	\norm{F}_{T^{1,2}(\R^{n+1}_+)} = \int_{\R^n} \left(\iint_{R^{n+1}_+} t^{-n/2}\Eins_{B(x,\sqrt{t})}(y) \abs{F(t,y)}^2 \,dydt \right)^{1/2} dx < \infty.
\]
The tent spaces $T^{\infty,1}(\R^{n+1}_+)$ and $T^{\infty,2}(\R^{n+1}_+)$ are defined as the spaces of all functions $F \in L^2_{\loc}(\R^{n+1}_+)$ such that 
\[
	\norm{F}_{T^{\infty,p}(\R^{n+1}_+)}
	 = \sup_{x \in \R^n} \sup_{t>0} \left(t^{-n/2} \int_0^t \int_{B(x,\sqrt{t})} \abs{F(s,y)}^p \,dyds\right)^{1/p} <\infty, 
\]
for $p \in {1,2}$, respectively.\\ 
The tent space $T^{1,\infty}(\R^{n+1}_+)$ is defined as the space of all continuous functions $F:\R^{n+1}_+ \to \C$ such that the parabolic non-tangential limit $\lim_{\substack{ (t,y) \to x\\  x\in B(y,\sqrt{t}) }} F(t,y)$ exists for a.e. $x \in \R^n$ and 
\[
	\norm{F}_{T^{1,\infty}(\R^{n+1}_+)} = \nnorm{N(F)}_{L^1(\R^n)}  <\infty,\]
where $N$, defined by $N(F)(x):=\sup_{(t,y);   x\in B(y,\sqrt{t})} \abs{F(t,y)}$, denotes the non-tangential maximal function.
\end{Def}

The tent spaces were introduced in \cite{CMS}, but in elliptic scaling. 
It is easy to check that 
\[
	F \in T^{1,2}(\R^{n+1}_+) \quad \Leftrightarrow \quad 
	G \in T^{1,2}_{\el}(\R^{n+1}_+), \qquad \text{where} \ G(t,\,.\,) :=tF(t^2,\,.\,),
\]
and $T^{1,2}_{\el}(\R^{n+1}_+)$ denotes the tent space in elliptic scaling as in \cite{CMS}.
The corresponding rescaling holds true for $T^{\infty,1}(\R^{n+1}_+)$ and $T^{\infty,2}(\R^{n+1}_+)$. For $T^{1,\infty}(\R^{n+1}_+)$, however, one has $G(t,\,.\,):=F(t^2,\,.\,)$ instead.\\
One has the duality $(T^{1,2}(\R^{n+1}_+))' = T^{\infty,2}(\R^{n+1}_+)$ and $(T^{1,\infty}(\R^{n+1}_+))' = T^{\infty,1}(\R^{n+1}_+)$ for the $L^2$ inner product  $\iint_{\R^{n+1}_{+}} f(t,y) \overline {g(t,y)}\, dydt$.\\

We recall the definition of the admissible path space for \eqref{NSE}  in \cite{KochTataru} (with the notation as in \cite{Lemarie}).

\begin{Def}
Let $T \in (0,\infty]$. Define
\[
	\calE_T:=\{ u \in L^2_{\uloc,x}L^2_t ((0,T) \times \R^n) \,:\, \norm{u}_{\calE_T} <\infty\},
\]
with
\[
	 \norm{u}_{\calE_T} := \sup_{0<t<T} \norm{t^{1/2}u(t,\,.\,)}_{L^{\infty}(\R^n)} + \sup_{x \in \R^n} \sup_{0<t<T} \left(t^{-n/2} \int_0^t \int_{B(x,\sqrt{t})} \abs{u(s,y)}^2 \,dyds\right)^{1/2}.
\]
\end{Def}

\begin{Remark}
(i) Observe that for $T=\infty$, one has 
\begin{align} \label{Def-ET-infinity}
	\norm{u}_{\calE_\infty} 
	= \sup_{t>0} \norm{t^{1/2} u(t,\,.\,)}_{L^\infty(\R^n)} 
	+ \norm{u}_{T^{\infty,2}(\R^{n+1}_+)}.
\end{align}
(ii) The corresponding adapted value space $E_T$ is defined as the space for which $u_0 \in E_T$ if and only if $u_0 \in \calS'(\R^n)$ and $(e^{t\Delta}u_0)_{0<t<T} \in \calE_T$. 
Observe that the first part of the norm in \eqref{Def-ET-infinity} corresponds to the adapted value space $\dot{B}^{-1}_{\infty,\infty}(\R^n)$ and the second part to $BMO^{-1}(\R^n)$. Since $BMO^{-1}(\R^n) \hookrightarrow \dot{B}^{-1}_{\infty,\infty}(\R^n)$, one has $E_\infty=BMO^{-1}(\R^n)$.
\end{Remark}

\begin{Theorem} \label{mainTheorem}
Let $T \in (0,\infty]$. The bilinear operator $B$ defined in \eqref{Def-BilinearOp} is continuous from $(\calE_T)^n \times (\calE_T)^n$ to $(\calE_T)^n$.
\end{Theorem}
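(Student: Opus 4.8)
The plan is to estimate $B(u,v)$ in the two pieces of the norm $\|\cdot\|_{\calE_T}$ separately, reducing each to a statement about the maximal regularity operator
\[
	\calM F(t,\ldot) := \int_0^t e^{(t-s)\Delta} F(s,\ldot)\, ds
\]
composed with the divergence-type singular operator. More precisely, writing $B(u,v) = \calM\bigl(\Pb\div(u\otimes v)\bigr)$, I would first split $\Pb\div$ so that it acts together with one factor $e^{(t-s)\Delta/2}$ of the semigroup: schematically $e^{(t-s)\Delta}\Pb\div = \bigl(e^{(t-s)\Delta/2}\Pb\div\bigr)\, e^{(t-s)\Delta/2}$, where the operator in brackets has a kernel with parabolic scaling bounds (Gaussian decay, one extra power of $(t-s)^{-1/2}$), i.e.\ it behaves like a parabolically-scaled Calderón–Zygmund operator. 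The product $u\otimes v$ lies in a tent space of the right exponents: from $\|u\|_{\calE_T},\|v\|_{\calE_T}<\infty$ one gets $u\otimes v \in T^{\infty,1}$ together with the pointwise bound $s|u\otimes v(s,\ldot)| \lesssim \|u\|_{\calE_T}\|v\|_{\calE_T}$ in $L^\infty$; equivalently $u\otimes v$ belongs to the endpoint space that pairs with the two norms defining $\calE_T$.

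The key step is then to prove boundedness of the operator $F \mapsto \calM_{\text{sing}} F := \int_0^t e^{(t-s)\Delta}\Pb\div F(s,\ldot)\,ds$ on tent spaces in parabolic scaling, namely
\[
	\calM_{\text{sing}} : T^{\infty,2}(\R^{n+1}_+) \to T^{\infty,2}(\R^{n+1}_+)
	\quad\text{and}\quad
	\calM_{\text{sing}} : L^\infty_t(t^{1/2}\,dt\text{-weighted}) \to \text{same},
\]
with the relevant source spaces being the ones that $u\otimes v$ actually occupies. The $T^{\infty,2}$–to–$T^{\infty,2}$ bound is the substitute for Koch–Tataru's self-adjointness argument: I would obtain it by duality from a $T^{1,2}\to T^{1,2}$ bound for the adjoint maximal regularity operator (the one with kernel $e^{(s-t)\Delta}\Pb\div$ running backwards), using the known $T^{1,2}$ theory and the fact that after the parabolic rescaling $G(t,\ldot)=tF(t^2,\ldot)$ these become the standard elliptic-scaling tent space estimates for which maximal regularity / Calderón–Zygmund-type results are available. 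The off-diagonal Gaussian bounds on $e^{(t-s)\Delta/2}\Pb\div$ give the required decay to run a Schur-type or $T(1)$-type argument on the tent space, and the parabolic homogeneity ensures the scaling matches the tent space norms.

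For the $L^\infty$–bound on $\sup_t \|t^{1/2} B(u,v)(t,\ldot)\|_\infty$ I would argue more directly: split the integral $\int_0^t = \int_0^{t/2} + \int_{t/2}^t$. On $(t/2,t)$ one uses the pointwise bound $s|u\otimes v(s,\ldot)|_\infty \lesssim \|u\|_{\calE_T}\|v\|_{\calE_T}$ and $\|e^{(t-s)\Delta}\Pb\div\|_{\infty\to\infty} \lesssim (t-s)^{-1/2}$, giving $\int_{t/2}^t (t-s)^{-1/2}\, s^{-1}\, ds \lesssim t^{-1/2}$. On $(0,t/2)$ the extra smoothing lets one instead use the $T^{\infty,2}$ information on $u\otimes v$ against an $L^2_y$ bound of the kernel of $e^{(t-s)\Delta}\Pb\div$ localized to a ball $B(x,\sqrt t)$, again yielding the $t^{-1/2}$ decay after integrating in $s$; this is the classical Koch–Tataru type estimate and is the routine part. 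Finally I would verify the local square-integrability / membership $B(u,v)\in L^2_{\uloc,x}L^2_t$ so that $B(u,v)$ genuinely lies in $\calE_T$, and check that all constants are uniform in $T\in(0,\infty]$ (for finite $T$ one extends $u,v$ by zero past $T$, or works on $(0,T)$ directly; the estimates only improve).

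I expect the main obstacle to be the $T^{\infty,2}\to T^{\infty,2}$ boundedness of the singular maximal regularity operator: this is precisely where Koch and Tataru invoked self-adjointness of $\Delta$, and replacing it requires the full tent-space maximal regularity machinery (the dualization $T^{\infty,2}=(T^{1,2})'$, the parabolic-to-elliptic rescaling, and the off-diagonal estimates for $e^{t\Delta}\Pb\div$), together with care about the fact that $\calM_{\text{sing}}$ is only a \emph{restricted} maximal regularity operator (integration up to time $t$, not over all of $(0,\infty)$), so one must handle the causal truncation — typically by writing the truncated operator as a sum of a full maximal regularity operator and a term controlled by the semigroup acting on an averaged datum.
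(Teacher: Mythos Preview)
Your $L^\infty$ estimate is fine and coincides with what the paper does (it too defers this part to the Koch--Tataru kernel argument). The gap is in the $T^{\infty,2}$ estimate. You propose to bound $\calM_{\text{sing}}$ directly as a map into $T^{\infty,2}$ by dualizing to a $T^{1,2}\to T^{1,2}$ bound for the adjoint and running a Schur/off-diagonal argument. But the source space you actually need is not $T^{\infty,2}$: the tensor $u\otimes v$ lies in $T^{\infty,1}$, or equivalently $s^{1/2}(u\otimes v)\in T^{\infty,2}$ (the weighted space $T^{\infty,2}_{1/2}$). The paper in fact \emph{proves} (Proposition in Section~4) that the full-time piece $\calA_2 F(t)=\int_0^\infty e^{(t+s)\Delta}\Pb\div F(s)\,ds$ is \emph{not} bounded from $T^{\infty,2}_{1/2}$ to $T^{\infty,2}$, nor on $T^{2,2}_{1/2}$. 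Hence a pure weighted-$T^{\infty,2}$ maximal regularity argument cannot close by itself, and the dual $T^{1,2}\to T^{1,2}$ bound you propose would fail for the corresponding adjoint.

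The missing idea is the paper's specific three-term splitting $\calA=\calA_1+\calA_2+\calA_3$, obtained from $e^{(t-s)\Delta}=e^{(t-s)\Delta}(I-e^{2s\Delta})+e^{(t+s)\Delta}$ together with $\int_0^t e^{(t+s)\Delta}=\int_0^\infty-\int_t^\infty$. The pieces $\calA_1$ (the genuine maximal regularity operator $\calM^+$ composed with a uniformly bounded multiplier $T_s=(s(-\Delta))^{-1}(I-e^{2s\Delta})s^{1/2}\Pb\div$) and $\calA_3$ (the non-singular tail) are each bounded $T^{\infty,2}_{1/2}\to T^{\infty,2}$ by exactly the tent-space/Schur arguments you sketch. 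But $\calA_2$ must instead be bounded $T^{\infty,1}\to T^{\infty,2}$, and the paper does this by dualizing to $\calA_2^\ast:T^{1,2}\to T^{1,\infty}$ and \emph{factoring through the Hardy space $H^1(\R^n)$}: the map $G\mapsto \int_0^\infty \nabla\Pb e^{t\Delta}G(t)\,dt$ sends $T^{1,2}\to H^1$ (square-function characterization), and $h\mapsto e^{s\Delta}h$ sends $H^1\to T^{1,\infty}$ (maximal-function characterization). Your outline does not contain this Hardy-space step; your last paragraph correctly anticipates that the causal truncation is the obstacle, but the resolution is this particular decomposition with the $H^1$ factorization for $\calA_2$, not a generic ``full plus correction'' splitting handled entirely inside $T^{\infty,2}$.
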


\begin{proof}
We restrict ourselves to the case $T=\infty$. The same argument works otherwise. \\
\textbf{Step 1} (From linear to bilinear).
In a first step, one reduces the bilinear estimate to a linear estimate. We use the following fact, which is a simple consequence of H\"older's inequality:
\begin{align} \label{alpha-condition}
u,v \in (\calE_\infty)^n, \; \alpha:=u \otimes v 
\quad \Rightarrow \quad
\begin{cases} 
 \alpha \in T^{\infty,1}(\R^{n+1}_+;\C^n \otimes \C^n), \\
 s^{1/2}\alpha(s,\,.\,) \in T^{\infty,2}(\R^{n+1}_+;\C^n \otimes \C^n), \\
 s \alpha(s,\,.\,) \in L^{\infty}(\R^{n+1}_+;\C^n \otimes \C^n).
 \end{cases}
\end{align}
It thus suffices to show that for the operator $\calA$ defined by
\[
	\calA(\alpha)(t,\ldot) = \int_0^t e^{(t-s)\Delta} \Pb \div \alpha(s,\,.\,) \, ds,
\]
there exists a constant $C>0$ such that for all $\alpha$ satisfying the conditions in \eqref{alpha-condition},

\begin{align} \label{linear-est1}
	\norm{t^{1/2}\calA(\alpha)}_{L^{\infty}(\R^{n+1}_+;\C^n)} 
	& \leq C \norm{\alpha}_{T^{\infty,1}(\R^{n+1}_+;\C^n \otimes \C^n)} + \norm{s\alpha(s,\,.\,)}_{L^{\infty}(\R^{n+1}_+;\C^n \otimes \C^n)}, \\ 
	\label{linear-est2}
	\norm{\calA(\alpha)}_{T^{\infty,2}(\R^{n+1}_+;\C^n)}
	 &\leq C \norm{\alpha}_{T^{\infty,1}(\R^{n+1}_+;\C^n \otimes \C^n)}
	 + \norm{s^{1/2}\alpha(s,\,.\,)}_{T^{\infty,2}(\R^{n+1}_+;\C^n \otimes \C^n)}.
\end{align}
\textbf{Step 2} ($L^\infty$ estimate).\\
We omit the proof of \eqref{linear-est1}, noticing that the proof of \eqref{linear-est1} in \cite{KochTataru} only uses the polynomial bounds  on  the Oseen kernel $k_t(x)$ of $e^{t\Delta}\Pb$ (See e.g. \cite{Lemarie}, Chapter 11) for $|\beta|=1$, 
\[
	\abs{ t^{\abs{\beta}/2}\partial_\beta k_t(x) } \leq C t^{-n/2}(1+t^{-1/2}\abs{x})^{-n-\abs{\beta}} 
	\qquad \forall \beta \in \N^n, \; \forall x \in \R^n,
\]
and no other special properties on the corresponding operator $e^{t\Delta}\Pb$.\\

\textbf{Step 3} ($T^{\infty,2}$ estimate - New decomposition).\\
We split $\calA$ into three parts:
\begin{align*}
 \calA(\alpha)(t,\,.\,) &= \int_0^t e^{(t-s)\Delta} \Pb \div \alpha(s) \,ds \\
  & =\int_0^t e^{(t-s)\Delta} \Delta  (s(-\Delta))^{-1} (I-e^{2s\Delta}) s^{1/2} \Pb \div s^{1/2} \alpha(s,\,.\,)\,ds \\
  & \qquad + \int_0^{\infty} e^{(t+s)\Delta} \Pb \div \alpha(s,\,.\,)\,ds\\
  & \qquad -  \int_t^\infty e^{(t+s)\Delta} \Pb s^{-1/2} \div s^{1/2}\alpha(s,\,.\,)\,ds\\
  & =:\calA_1(\alpha)(t,\ldot)+ \calA_2(\alpha)(t,\ldot) + \calA_3(\alpha)(t,\ldot).
\end{align*}

\textbf{Step 3(i)} (Maximal regularity operator).
To treat  $\calA_1$, we use the fact that the maximal regularity operator 
\begin{align} \label{def-maxreg} \nonumber
 & \calM^+ : 	T^{\infty,2}(\R^{n+1}_+) \to T^{\infty,2}(\R^{n+1}_+), \\
  & (\calM^+ F)(t,\,.\,) := \int_0^t e^{(t-s)\Delta} \Delta F(s,\,.\,) \,ds,
\end{align}
is bounded. 
The result for $T^{2,2}(\R^{n+1}_+) = L^2(\R^{n+1}_+)$ was established by de Simon in \cite{deSimon}.
The extension to $T^{\infty,2}(\R^{n+1}_+)$ was implicit in [9], but not formulated this way. It  is an application of \cite{AMP}, Theorem 3.2, taking $\beta=0$, $m=2$ and $L=-\Delta$, noting that the Gaussian bounds for the kernel of $t\Delta e^{t\Delta}$ yield the needed decay.\\
Next, for $s>0$, define $T_s:= (s(-\Delta))^{-1}(I-e^{2s\Delta}) s^{1/2}\Pb \div$. Observe that $T_s$ is bounded uniformly in $L^2(\R^n)$, and that $T_s$ is an integral operator with convolution kernel $k_s$ satisfying estimates of order $n+1$ at $\infty$, \textit{i.e.} 
\begin{align} \label{kernel-est-Ts}
	\abs{k_s(x)} \leq C s^{-n/2}(s^{-1/2}\abs{x})^{-n-1} \qquad \forall x \in \R^n, \, \abs{x} \geq s^{1/2}.
\end{align}
We show in Lemma \ref{boundedness-calT} below that the operator $\calT$, defined by
\begin{align} \label{def-multop} \nonumber
	& \calT: T^{\infty,2}(\R^{n+1}_+;\C^n \otimes \C^n) \to T^{\infty,2}(\R^{n+1}_+;\C^n), \\
	& (\calT F)(s,\,.\,) := T_s (F(s,\,.\,)),
\end{align}
is bounded.
With the definitions in \eqref{def-maxreg} and \eqref{def-multop}, we then have $\calA_1(\alpha) = \calM^+\calT (s^{1/2}\alpha(s,\,.\,))$ and the boundedness of these operators imply
\begin{align*}
 \norm{\calA_1(\alpha)}_{T^{\infty,2}}
 &= \norm{\calM^+\calT (s^{1/2}\alpha(s,\,.\,))}_{T^{\infty,2}} \\
 	&\lesssim \norm{\calT (s^{1/2}\alpha(s,\,.\,))}_{T^{\infty,2}} \lesssim \norm{s^{1/2}\alpha(s,\,.\,)}_{T^{\infty,2}}.
\end{align*}

\textbf{Step 3(ii)} (Hardy space estimates).
This is the main new part of the proof.  We use in the following that the Leray projection $\Pb$ commutes with the Laplacian 
and the above bounds on the Oseen kernel to show that 
\begin{align} \label{def-singularint} \nonumber
 &\calA_{2} : T^{\infty,1}(\R^{n+1}_+;\C^n \otimes \C^n) \to T^{\infty,2}(\R^{n+1}_+;\C^n),\\
 &(\calA_{2} F)(t,\,.\,) := \int_0^\infty e^{(t+s)\Delta}\Pb\div F(s,\,.\,) \,ds,
\end{align}
is bounded. 
 We work via dualisation and
 it is enough to show  that
 \begin{align} \label{def-A2-dual} \nonumber
 &\calA_{2}^\ast : T^{1,2}(\R^{n+1}_+;\C^n) \to T^{1,\infty}(\R^{n+1}_+;\C^n \otimes \C^n),\\
  &(\calA_{2}^\ast G)(s,\,.\,) = e^{s\Delta}  \int_0^{\infty} \nabla \Pb e^{t\Delta} G(t,\,.\,) \,dt,
\end{align}
is bounded. 
To see this, we factor $\calA_{2}^\ast$ through the Hardy space $H^1(\R^n; \C^n \otimes \C^n)$.
We know from classical Hardy space theory, that $H^1(\R^n)$ can either be defined via non-tangential maximal functions or via square functions (here in parabolic scaling instead of the more commonly used elliptic scaling). First,  the operator 
\begin{align*} 
& \calS : T^{1,2}(\R^{n+1}_+;\C^n) \to H^1(\R^n; \C^n \otimes \C^n),\\
 & \calS G(\,.\,) =  \int_0^{\infty} \nabla \Pb e^{t\Delta} G(t,\,.\,) \,dt,
\end{align*}
is bounded. This uses the polynomial decay of order $n+1$ at $\infty$ of the kernel of $\nabla \Pb e^{t\Delta}$ (some weaker decay of non-pointwise type would suffice for this,  in fact). The precise calculations are given in \cite{FS} (cf. also \cite{CMS}).  Second,   again by \cite{FS},  we have for $h \in H^1(\R^n)$ that $(s,x) \mapsto e^{s\Delta} \,h(x) \in T^{1,\infty}$ and  $\norm{N(e^{s\Delta} \, h)}_{L^1(\R^n)} \lesssim \norm{h}_{H^1(\R^n)}$. The same holds componentwise for $\C^n \otimes \C^n$ valued functions. A combination of both estimates gives the result.\\

\textbf{Step 3(iii)} (Remainder term). The considered integral in $\calA_3$ is not singular in $s$ and  is an error term.    It suffices to show that
\begin{align} \label{def-errorterm} \nonumber
  &\calR : T^{\infty,2}(\R^{n+1}_+;\C^n \otimes \C^n) \to T^{\infty,2}(\R^{n+1}_+;\C^n),\\
  &(\calR F)(t,\,.\,):= \int_t^{\infty} e^{(t+s)\Delta} \Pb s^{-1/2} \div F(s,\,.\,) \,ds
\end{align}
is bounded as $\calA_{3}(\alpha)= \calR (s^{1/2} \alpha(s,\,.\,))$. This can be seen as a special case of \cite{AKMP}, Theorem 4.1 (2). We give a self-contained proof in Lemma \ref{Lemma-errorterm} below.
\end{proof}

\section{Technical results}

\begin{Lemma} \label{boundedness-calT}
Let $(T_s)_{s>0}$ be a family of uniformly bounded operators in $L^2(\R^n)$, which satisfy $L^2$-$L^\infty$ off-diagonal estimates of the form 
\begin{align} \label{L2-Linfty-Ts}
 	\norm{\Eins_E T_s \Eins_{\tilde E}}_{L^2(\R^n) \to L^{\infty}(\R^n)}
 		\leq C s^{-\frac{n}{4}} \left(s^{-1/2}\dist(E,\tilde E)\right)^{-\frac{n}{2}-1}
\end{align}
for all Borel sets $E,\tilde E \subseteq \R^n$ with $\dist(E,\tilde E) \geq s^{1/2}$. Then the operator $\calT$, defined by
\begin{align*} 
	& \calT: T^{\infty,2}(\R^{n+1}_+) \to T^{\infty,2}(\R^{n+1}_+), \\
	& (\calT F)(s,\,.\,) := T_s (F(s,\,.\,)),
\end{align*}
is bounded.
\end{Lemma}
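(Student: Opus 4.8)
\textbf{Proof plan for Lemma \ref{boundedness-calT}.}

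The plan is to estimate the $T^{\infty,2}$-norm of $\calT F$ directly by testing against a fixed Carleson cube. Fix $x_0 \in \R^n$ and $r>0$, set $B_0 = B(x_0, r)$, and estimate
\[
	r^{-n} \int_0^{r^2} \int_{B(x_0,r)} \abs{(\calT F)(s,y)}^2 \, dy \, ds.
\]
For each fixed $s \in (0, r^2)$ we need a bound on $\int_{B_0} \abs{T_s(F(s,\cdot))(y)}^2 dy$. The idea is to perform a standard annular (dyadic) decomposition of the input: write $F(s,\cdot) = \sum_{j \geq 0} F(s,\cdot) \Eins_{C_j(s)}$, where $C_0(s)$ is a slightly enlarged cube around $B_0$ of radius comparable to $\max(r, s^{1/2})$ and $C_j(s)$ for $j \geq 1$ are the dyadic annuli $2^j$ times that scale. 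For the local part $j=0$, one uses the uniform $L^2$-boundedness of $T_s$ to control $\norm{\Eins_{B_0} T_s \Eins_{C_0}}_{L^2 \to L^2}$ by a constant; for $j \geq 1$, since $\dist(B_0, C_j(s)) \gtrsim 2^j \max(r,s^{1/2}) \geq s^{1/2}$, the hypothesis \eqref{L2-Linfty-Ts} applies and gives
\[
	\norm{\Eins_{B_0} T_s \Eins_{C_j(s)}}_{L^2 \to L^\infty} \lesssim s^{-n/4} \big( s^{-1/2} 2^j \max(r,s^{1/2}) \big)^{-n/2-1}.
\]
Multiplying by $\abs{B_0}^{1/2} = c r^{n/2}$ to pass from $L^\infty$ back to $L^2(B_0)$ and summing the geometric series in $j$ (the exponent $-n/2-1 < 0$ ensures convergence) yields, after the dust settles, a pointwise-in-$s$ bound of the shape
\[
	\Big( \int_{B_0} \abs{T_s(F(s,\cdot))(y)}^2 dy \Big)^{1/2} \lesssim \sum_{j \geq 0} 2^{-\eps j} \kappa_j(s,r) \Big( \int_{C_j(s)} \abs{F(s,y)}^2 dy \Big)^{1/2}
\]
for suitable scale factors $\kappa_j(s,r)$ coming from the kernel decay and the $r^{n/2}$ factor.

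After squaring and integrating in $s$ over $(0,r^2)$, and using Cauchy--Schwarz in $j$ against the summable weight $2^{-\eps j}$, the problem reduces to bounding, for each fixed $j$,
\[
	r^{-n} \int_0^{r^2} \kappa_j(s,r)^2 \int_{C_j(s)} \abs{F(s,y)}^2 \, dy \, ds.
\]
Here one splits the $s$-integral according to whether $s^{1/2} \leq r$ (so $\max(r,s^{1/2}) = r$ and $C_j(s)$ is essentially the fixed annulus $2^j B_0$) or $s^{1/2} > r$ — but since $s < r^2$ the latter does not occur, so in fact $C_j(s)$ is comparable to the $s$-independent ball $2^{j} B_0$ throughout, which simplifies matters. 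The integral over $C_j(s) \subseteq B(x_0, c2^j r)$ and $s \in (0,r^2) \subseteq (0, (c2^j r)^2)$ is then controlled by the defining Carleson functional of $F$ at scale $2^j r$:
\[
	\int_0^{(c2^jr)^2} \int_{B(x_0, c2^j r)} \abs{F(s,y)}^2 dy\, ds \lesssim (2^j r)^n \norm{F}_{T^{\infty,2}}^2,
\]
and one checks that $r^{-n} \kappa_j(s,r)^2 (2^jr)^n$ still carries enough decay in $j$ (this is where the gain of $n/2 + 1$ derivatives matters: after accounting for the volume growth $(2^j)^n$ of the enlarged cube there is net exponential decay $2^{-(2+ \text{something})j}$, more than enough). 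Summing in $j$ finishes the estimate and gives $\norm{\calT F}_{T^{\infty,2}} \lesssim \norm{F}_{T^{\infty,2}}$; the vector-valued statement for $\C^n \otimes \C^n \to \C^n$ follows componentwise as in Step 3(ii).

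The main obstacle I anticipate is bookkeeping the two competing scales $r$ (the size of the test cube) and $s^{1/2}$ (the scale intrinsic to $T_s$) correctly in the annular decomposition: one must choose the inner cube $C_0(s)$ at scale $\max(r, s^{1/2})$ rather than just $r$ so that the off-diagonal hypothesis, which is only available for $\dist(E,\tilde E) \geq s^{1/2}$, actually applies on every annulus $C_j(s)$, $j\geq 1$. Since the relevant range here is $s < r^2$, i.e. $s^{1/2} < r$, this is in fact automatic and the two scales decouple — but for the general-$T$ version (or if one wanted a cleaner argument) one should carry the $\max$ through. The only genuinely quantitative point to be careful about is verifying that the kernel decay exponent $-n/2 - 1$, together with the $L^\infty \to L^2$ loss of $r^{n/2}$ and the Carleson volume growth $(2^j r)^n$, leaves a strictly positive power of $2^{-j}$ after all cancellations; a direct count of powers confirms this, with room to spare.
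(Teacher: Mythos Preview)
Your proposal is correct and follows essentially the same route as the paper's proof: fix a Carleson box, perform a dyadic annular decomposition of $F(s,\cdot)$ around its base, handle the local piece by uniform $L^2$-boundedness and the far pieces via the $L^2$--$L^\infty$ off-diagonal estimate (after a H\"older step back to $L^2$ on the base), and then verify the resulting geometric series in $j$ converges against the Carleson volume growth. The only cosmetic differences are that the paper parametrises the box radius as $\sqrt t$ rather than $r$, uses Minkowski in $j$ rather than Cauchy--Schwarz, and does not introduce the scale $\max(r,s^{1/2})$ at all (which, as you yourself note, collapses to $r$ on the relevant range $s<r^2$ anyway).
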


\begin{Remark}
A straightforward calculation shows that the kernel estimates in \eqref{kernel-est-Ts} imply the $L^2$-$L^\infty$ off-diagonal estimates in \eqref{L2-Linfty-Ts}.
\end{Remark}

\begin{proof}
The proof is a slight modification of \cite{HvNP}, Theorem 5.2.
Let $F \in T^{\infty,2}(\R^{n+1}_+)$ and fix $(t,x) \in \R^{n+1}_+$. 
Define $F_0:=\Eins_{B(x,2\sqrt{t})}F$ and $F_j:=\Eins_{B(x,2^{j+1}\sqrt{t})\setminus B(x,2^j\sqrt{t})}F$ for $j \geq 1$. The uniform boundedness of $T_s$ in $L^2(\R^n)$ yields 
\[
\norm{T_sF_0(s,\,.\,)}_{L^2(B(x,\sqrt{t}))} \lesssim \norm{F(s,\,.\,)}_{L^2(B(x,2\sqrt{t}))}.
\]
For $s<t$ and $j \geq 1$, on the other hand,
H\"older's inequality and \eqref{L2-Linfty-Ts} yield
\begin{align*}
	& \norm{T_sF_j(s,\,.\,)}_{L^2(B(x,\sqrt{t}))}
		\lesssim t^{\frac{n}{4}} \norm{T_s F_j(s,\,.\,)}_{L^{\infty}(B(x,\sqrt{t}))}\\
		& \qquad \lesssim t^{\frac{n}{4}} s^{-\frac{n}{4}} \left(\frac{\sqrt{s}}{2^j\sqrt{t}}\right)^{\frac{n}{2}+1} \norm{F(s,\,.\,)}_{L^2(B(x,2^{j+1}\sqrt{t}))}
		\lesssim 2^{-j(\frac{n}{2}+1)} \norm{F(s,\,.\,)}_{L^2(B(x,2^{j+1}\sqrt{t}))}.
\end{align*}
Thus, 
\begin{align*}
	& \left(t^{-n/2} \int_0^t \norm{T_sF(s,\,.\,)}_{L^2(B(x,\sqrt{t}))}^2 \,ds\right)^{1/2} \\
	& \qquad \lesssim \sum_{j \geq 0} 2^{-j(\frac{n}{2}+1)} 2^{j\frac{n}{2}} \left((2^j\sqrt{t})^{-n} \int_0^t \norm{F(s,\,.\,)}_{L^2(B(x,2^{j+1}\sqrt{t}))} \,ds\right)^{1/2} 
	\lesssim \norm{F}_{T^{\infty,2}(\R^{n+1}_+)}.
\end{align*}
\end{proof}

\begin{Lemma} \label{Lemma-errorterm}
The operator $\calR$ defined in \eqref{def-errorterm} is bounded.
\end{Lemma}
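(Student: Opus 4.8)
The plan is to argue at the level of Carleson boxes. By definition of $T^{\infty,2}(\R^{n+1}_+)$ it suffices to fix $x_0\in\R^n$ and $\rho>0$, set $B:=B(x_0,\sqrt\rho)$ and $Q:=(0,\rho)\times B$, and prove
\[
  \rho^{-n/2}\iint_Q\abs{\calR F(t,y)}^2\,dy\,dt\lesssim\norm{F}_{T^{\infty,2}(\R^{n+1}_+)}^2
\]
with implicit constant independent of $x_0$ and $\rho$. First I would record the kernel facts: the $\abs{\beta}=1$ Oseen bound quoted in Step~2, together with $0<t\le s\Rightarrow s\le t+s\le 2s$, shows that $e^{(t+s)\Delta}\Pb s^{-1/2}\div$ is a convolution operator with kernel bounded by $Cs^{-1}s^{-n/2}(1+cs^{-1/2}\abs{\cdot})^{-n-1}$, and that it factors as $\tfrac{1}{\sqrt s\,\sqrt{t+s}}\,\Psi_{t+s}$ with $\Psi_r:=\sqrt r\,\nabla e^{r\Delta}\Pb$, the latter uniformly bounded on $L^2(\R^n)$ and satisfying $L^2$-$L^\infty$ off-diagonal estimates of the form \eqref{L2-Linfty-Ts} (at scale $\sqrt r$, with decay of order $\tfrac n2+1$).

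Next I would split the inner integral at the scale of the box: for $t<\rho$,
\[
  \calR F(t,\,.\,)=\calR^{\mathrm{loc}}F(t,\,.\,)+\calR^{\mathrm{glob}}F(t,\,.\,):=\int_t^\rho(\cdots)\,ds+\int_\rho^\infty(\cdots)\,ds,
\]
where $(\cdots)$ abbreviates $e^{(t+s)\Delta}\Pb s^{-1/2}\div F(s,\,.\,)$, and for $\calR^{\mathrm{loc}}$ I would additionally decompose $F$ spatially around $B$, namely $F=\Eins_{4B}F+\sum_{k\ge1}\Eins_{C_k}F$ with $C_k:=B(x_0,2^{k+1}\sqrt\rho)\setminus B(x_0,2^k\sqrt\rho)$. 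For $\calR^{\mathrm{glob}}$, where $s>\rho$ so the parabolic scale $\sqrt s$ already exceeds the radius of $B$, a direct pointwise estimate of the kernel — split the $z$-integration into $B(x_0,\sqrt s)$ and annuli, apply Cauchy--Schwarz, then decompose $(\rho,\infty)$ dyadically in $s$ and use the $T^{\infty,2}$-control of $F$ at each scale $2^j\rho$ — gives $\norm{\calR^{\mathrm{glob}}F(t,\,.\,)}_{L^2(B)}\lesssim\rho^{(n-2)/4}\norm{F}_{T^{\infty,2}}$ uniformly in $t\in(0,\rho)$, which is exactly the homogeneity for which $\rho^{-n/2}\int_0^\rho\abs{\,\cdot\,}^2\,dt$ closes. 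For the distant spatial pieces of $\calR^{\mathrm{loc}}$, the off-diagonal decay of $\Psi_{t+s}$ (scale $\sqrt{t+s}\lesssim\sqrt\rho$ against distance $\gtrsim 2^k\sqrt\rho$) supplies a summable factor $2^{-k(n/2+1)}$ together with a positive power of $(t+s)/\rho$; carrying out the $s$-integral with Cauchy--Schwarz costs at worst a factor $(\log(\rho/t))^{1/2}$, which is harmless since $\int_0^\rho\log(\rho/t)\,dt=\rho<\infty$. Summing over $k$ and integrating in $t$, both families are $\lesssim\norm{F}_{T^{\infty,2}}^2$.

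The heart of the matter — and the step I expect to be the main obstacle — is the local-in-space, local-in-time piece $\calR^{\mathrm{loc}}(\Eins_{4B}F)$. There $s$ may be arbitrarily close to $t$ and the kernel provides no spatial gain, so any estimate that takes $L^2(\R^n)$-norms in space before integrating in $s$ loses a logarithm whose square fails to be $t$-integrable on $(0,\rho)$; the log must be killed by cancellation. I would do this by first showing $\calR$ is bounded on $T^{2,2}(\R^{n+1}_+)=L^2(\R^{n+1}_+)$: the Fourier transform in $x$ turns it into a one-parameter integral operator with kernel $e^{-(t+s)\abs{\xi}^2}s^{-1/2}\abs{\xi}\,\Eins_{\{t<s\}}$ (the Leray symbol is a contraction, so may be dropped), and this satisfies a weighted Schur test with weight $w(s)=s^{-1/4}$, uniformly in $\xi$; this is where the $\nabla$/heat-semigroup cancellation enters. (Alternatively one reads this off \cite{AKMP}, Theorem~4.1~(2).) Since $\calR^{\mathrm{loc}}(\Eins_{4B}F)=\calR(\Eins_{(0,\rho)}\Eins_{4B}F)$ and, by the very definition of the $T^{\infty,2}$-norm, $\norm{\Eins_{(0,\rho)}\Eins_{4B}F}_{L^2(\R^{n+1}_+)}\lesssim\rho^{n/4}\norm{F}_{T^{\infty,2}}$, this yields $\rho^{-n/2}\iint_Q\abs{\calR^{\mathrm{loc}}(\Eins_{4B}F)}^2\lesssim\norm{F}_{T^{\infty,2}}^2$. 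Adding the three contributions finishes the proof. Structurally this is the same mechanism as in the proof of Lemma~\ref{boundedness-calT} and in \cite{AKMP} — local $L^2$-boundedness glued to off-diagonal decay over a Carleson box — the one new feature being the genuinely singular inner integral, which is precisely why kernel bounds alone do not suffice and one must feed in the $L^2(\R^{n+1}_+)$ estimate.
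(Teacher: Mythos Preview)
Your proposal is correct and follows essentially the same strategy as the paper's proof: first establish $L^2(\R^{n+1}_+)$-boundedness of $\calR$ via a Schur test (you do it on the Fourier side with weight $s^{-1/4}$, the paper uses the operator-norm bound $\norm{K(t,s)}_{2\to 2}\lesssim s^{-1/2}(t+s)^{-1/2}$ with weight $t^{\beta}$, $\beta\in(-\tfrac12,0)$ --- these are the same computation), then upgrade to $T^{\infty,2}$ by combining this local $L^2$ bound with $L^2$--$L^\infty$ off-diagonal decay over a fixed Carleson box. The only difference is bookkeeping: the paper carves $F$ into \emph{parabolic} space--time annuli $C_j=\bigl((0,2^jr)\times B(x_0,2^j\sqrt r)\bigr)\setminus\bigl((0,2^{j-1}r)\times B(x_0,2^{j-1}\sqrt r)\bigr)$ and then, for the far pieces, decomposes the $s$-integral dyadically relative to $t$, whereas you first split the $s$-integral at the box height $\rho$ and then, for the local-in-time part, decompose $F$ purely in space. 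Both organisations lead to the same geometric series; your handling of the harmless $(\log(\rho/t))^{1/2}$ loss in the local--far piece is fine. (One small imprecision: after factoring out $\tfrac{1}{\sqrt{s}\sqrt{t+s}}$, the off-diagonal bound on $\Psi_{t+s}$ returns exactly $(t+s)^{1/2}$, which cancels the $(t+s)^{-1/2}$ --- there is no residual positive power of $(t+s)/\rho$, but the logarithmic argument you give still closes.)
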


\begin{proof}
We write
\[
	(\calR F)(t,\,.\,) = \int_t^\infty K(t,s)F(s,\,.\,)\,ds,
\] 
with $K(t,s):=e^{(t+s)\Delta} \Pb s^{-1/2} \div$ for $s,t>0$. We first show the boundedness of $\calR$ on $L^2(\R^{n+1}_+)$.
This follows from the easy bound   $\norm{K(t,s)}_{L^2(\R^n) \to L^2(\R^n)} \leq C s^{-1/2}(t+s)^{-1/2}$. Indeed, pick some $\beta \in (-\frac{1}{2},0)$, set $p(t):=t^{\beta}$ and observe that  
$k(t,s) := \Eins_{(t,\infty)}(s) \norm{K(t,s)}_{L^2(\R^n) \to L^2(\R^n)}$ satisfies
\begin{align*}
 \int_0^{\infty} {k(t,s)} p(t) \,dt &\lesssim  \int_0^s s^{-1/2} t^{-1/2} t^{\beta} \,dt \lesssim s^{\beta}=p(s),\\
  \int_0^{\infty} {k(t,s)} p(s) \,ds &\lesssim \int_t^{\infty} s^{-1/2} s^{-1/2} s^{\beta} \,ds \lesssim t^{\beta}=p(t).
\end{align*}
This allows to apply Schur's lemma.

Next, we show that $\calR$ extends to a bounded operator on $T^{\infty,2}(\R^{n+1}_+)$. Note that for all $s,t>0$, the operator $K(t,s)$ is an  integral operator of convolution with  $k_{t,s}$, which satisfies
\begin{align} \label{kernel-kts}
	\abs{k_{t,s}(x)} \leq C s^{-1/2} (t+s)^{-1/2} (t+s)^{-\frac{n}{2}}\left(1+(t+s)^{-1/2}\abs{x}\right)^{-n-1} \qquad \forall x \in \R^n.
\end{align}
These estimates imply $L^2$-$L^\infty$ off-diagonal estimates of the form
\begin{align} \label{L2-Linfty-est-M-}
	\norm{\Eins_E K(t,s) \Eins_{\tilde E}}_{L^2(\R^n) \to L^{\infty}(\R^n)}
	\leq C s^{-1/2} (t+s)^{-1/2} (t+s)^{-\frac{n}{4}}\left(1+(t+s)^{-1/2}\dist(E,\tilde E)\right)^{-\frac{n}{2}-1}
\end{align}
for all Borel sets $E,\tilde E \subseteq \R^n$.

Let $F \in T^{\infty,2}(\R^{n+1}_+)$ and fix $(r,x_0) \in \R^{n+1}_+$. Define
$B_j:=(0,2^j r) \times B(x_0,2^j \sqrt{r})$ for $j \geq 0$ and $C_j:=B_j \setminus B_{j-1}$ for $j \geq 1$.
Then set $F_0:=\Eins_{B_0} F$ and $F_{j}:=\Eins_{C_j}F$ for $j \geq 1$. Using Minkowski's inequality, we have
\begin{align*}
	&\left(r^{-n/2}\int_0^r \norm{(\calR F)(t,\,.\,)}_{L^2(B(x_0,\sqrt{r}))}^2 \,dt\right)^{1/2}\\ 
	& \qquad \lesssim \sum_{j \geq 0} \left(r^{-n/2}\int_0^r \norm{(\calR F_j)(t,\,.\,)}_{L^2(B(x_0,\sqrt{r}))}^2 \,dt\right)^{1/2} =:\sum_{j \geq 0} I_j.
\end{align*}
For $j=0$, the boundedness of $\calR$ on $L^2(\R^{n+1}_+)$ yields the desired estimate $\norm{I_0} \lesssim \norm{F}_{T^{\infty,2}(\R^{n+1}_+)}$. 
For $j \geq 1$, we split the integral in $s$ and use H\"older's inequality to obtain
\begin{align} \label{est-Ij}
	I_j \lesssim \sum_{k \geq 0} \left(r^{-n/2} \int_0^r \int_{2^kt}^{2^{k+1}t} (2^k t) \norm{K(t,s)F_j(s,\,.\,)}_{L^2(B(x_0,\sqrt{r}))}^2 \,dsdt\right)^{1/2}
\end{align}
Now observe that for $j \geq 1$, $k \geq 0$, $t \in (0,r)$ and $s \in (2^kt,2^{k+1}t)$, H\"older's inequality and \eqref{L2-Linfty-est-M-} yield for $\delta \in (0,1]$
\begin{align*}
	& \norm{K(t,s) F_j(s,\,.\,)}_{L^2(B(x_0,\sqrt{r}))}
		 \lesssim r^{n/4} \norm{K(t,s) F_j(s,\,.\,)}_{L^\infty(B(x_0,\sqrt{r}))} \\
		& \qquad  \lesssim r^{n/4} s^{-1/2} (t+s)^{-1/2} (t+s)^{-n/4} \left(1+\frac{2^j\sqrt{r}}{(t+s)^{1/2}}\right)^{-\frac{n}{2}-\delta} \norm{F_j(s,\,.\,)}_{L^2} \\
		& \qquad \lesssim (2^j)^{-\frac{n}{2}-\delta} r^{-\delta/2} (2^kt)^{-1+\delta/2} \norm{F_j(s,\,.\,)}_{L^2}.
\end{align*}
Inserting this into \eqref{est-Ij}, interchanging the order of integration and choosing $\delta \in (0,1)$ finally gives
\begin{align*}
 \sum_{j \geq 1} I_j \lesssim \sum_{j \geq 1} \sum_{k \geq 0} 2^{-j\delta} 2^{-k(\frac{1}{2}-\frac{\delta}{2})}  \left((2^j \sqrt{r})^{-n}\int_0^{2^jr} \norm{F_j(s,\,.\,)}_{L^2}^2 \,ds\right)^{1/2} 
 \lesssim \norm{F}_{T^{\infty,2}(\R^{n+1}_+)}.
\end{align*}
\end{proof}

\section{Comments}

Let us denote by $T^{\infty,2}_{1/2}(\R^{n+1}_+)$ the weighted tent space defined by $F \in T^{\infty,2}_{1/2}(\R^{n+1}_+)$ if and only if $s^{1/2}F(s,\,.\,) \in T^{\infty,2}(\R^{n+1}_+)$. Respectively for $T^{2,2}_{1/2}(\R^{n+1}_+)$.\\

The first comment is that the  $T^{\infty,2}_{1/2}$ estimate for $\alpha$ is not used in \cite{KochTataru}.\\

The second comment is  that our proof is non local in time. By this, we mean that we need to know $\alpha= u\otimes v$ on the full time interval $[0,T]$ to get estimates for $B(u,v)$  at all smaller times $t$.  In contrast, the proof in \cite{KochTataru} is local in time: bounds for $u,v$ on the time interval $[0,t]$ suffices to get bounds at time $t$ for $B(u,v)$. \\

The third comment is on the optimality of the estimate in \eqref{linear-est2} which could be related to the second comment. 
We have seen in Section \ref{section:newProof} that both $\calA_1$ and $\calA_3$ are bounded operators from $T^{\infty,2}_{1/2}$ to $T^{\infty,2}$. It is thus a natural question whether the same holds for $\calA_{2}$ as it would eliminate the $T^{\infty,1}$ term in the right hand side of \eqref{linear-est2}.
We show that this is not the case. It is therefore necessary to use a different argument for $\calA_2$, as   is done in  Step 3(ii) above. In  \cite{KochTataru}, this operator does not arise.

\begin{Prop}
The operator $\calA_2$ is neither bounded as an operator from $T^{2,2}_{1/2}(\R^{n+1}_+;\C^n \otimes \C^n)$ to $T^{2,2}(\R^{n+1}_+;\C^n )$, nor from $T^{\infty,2}_{1/2}(\R^{n+1}_+;\C^n \otimes \C^n)$ to $T^{\infty,2}(\R^{n+1}_+;\C^n)$.
\end{Prop}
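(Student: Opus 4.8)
The strategy is to exhibit explicit test functions $F$ (or rather $G$ after dualising) that saturate the two estimates, exploiting the fact that $\calA_2$ integrates over the \emph{entire} time interval $(0,\infty)$, so that mass of $F$ placed at very large times $s$ still contributes to $(\calA_2 F)(t,\cdot)$ at small $t$ through the smoothing factor $e^{(t+s)\Delta}$. In the $T^{2,2}=L^2$ case it is cleanest to work with $\calA_2$ directly: recall $(\calA_2 F)(t,\cdot) = \int_0^\infty e^{(t+s)\Delta}\Pb\div F(s,\cdot)\,ds$. On the Fourier side this is $\widehat{(\calA_2 F)}(t,\xi) = \int_0^\infty e^{-(t+s)|\xi|^2} m(\xi)\,|\xi|\,\widehat{F}(s,\xi)\,ds$ where $m(\xi)$ is the bounded symbol of $\Pb$ composed with the direction of $\div$. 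Choosing $F$ of the form $F(s,\xi) = s^{-1/2}\Eins_{\{s\asymp \lambda^{-2}\}}(s)\,g(\xi)$ with $g$ supported where $|\xi|\asymp\lambda$, one computes that $\|s^{1/2}F\|_{T^{2,2}_{1/2}} = \|F\|_{L^2(\frac{ds}{?})\ldots}$ is comparable to a fixed constant (the weight $s^{1/2}$ exactly cancels the $s^{-1/2}$, leaving a scale-invariant quantity), while $(\calA_2 F)(t,\xi)\approx e^{-t|\xi|^2}|\xi|^{-1} g(\xi)\cdot(\text{const})$ for $t\lesssim\lambda^{-2}$, and then $\|\calA_2 F\|_{L^2(\R^{n+1}_+)}^2 \approx \int_0^\infty\int e^{-2t|\xi|^2}|\xi|^{-2}|g(\xi)|^2\,d\xi\,dt \approx \int |\xi|^{-4}|g(\xi)|^2\,d\xi$, which blows up as $\lambda\to 0$ relative to the $T^{2,2}_{1/2}$ norm. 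Thus letting $\lambda\to 0$ (low frequencies, long times) breaks boundedness; the point is the $|\xi|^{-4}$ singularity at the origin that the source term does not control.

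For the $T^{\infty,2}$ statement I would argue by duality, using $(T^{\infty,2})' = T^{1,2}$ and the adjoint $\calA_2^\ast G(s,\cdot) = e^{s\Delta}\int_0^\infty \nabla\Pb e^{t\Delta} G(t,\cdot)\,dt$ already computed in Step 3(ii), together with the (to be checked) duality $(T^{\infty,2}_{1/2})' = T^{1,2}_{-1/2}$, i.e. $G\in (T^{\infty,2}_{1/2})'$ iff $s^{-1/2}G\in T^{1,2}$. So it suffices to show $\calA_2^\ast$ is not bounded from $T^{1,2}$ to $T^{1,2}_{-1/2}$. Here I would take $G(t,\cdot) = \phi_t$ a parabolic approximate identity concentrated at a single scale and position but with a fixed, nonzero spatial mean, so that $\calS G = \int_0^\infty \nabla\Pb e^{t\Delta} G(t,\cdot)\,dt$ is a fixed $H^1$-type (or merely $L^1$) function; then $\calA_2^\ast G(s,\cdot) = e^{s\Delta}(\calS G)$, and its $T^{1,2}_{-1/2}$ norm is $\|s^{-1/2}e^{s\Delta}h\|_{T^{1,2}} = \int_{\R^n}\big(\iint t^{-n/2}\Eins_{B(x,\sqrt t)}(y)\,t^{-1}|e^{t\Delta}h(y)|^2\,dydt\big)^{1/2}dx$ with $h=\calS G$, and one shows this integral diverges because of the extra $t^{-1}$ weight near $t=0$ when $h$ does not vanish. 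Concretely, if $h$ is a nice bump, $e^{t\Delta}h \approx h$ for small $t$, so the $t$-integral behaves like $\int_0 t^{-1}\,dt = \infty$; a single well-chosen $G$ with $\|G\|_{T^{1,2}}<\infty$ already produces $\|\calA_2^\ast G\|_{T^{1,2}_{-1/2}}=\infty$, which is even stronger than needed.

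I would organise the write-up as: (1) reduce each statement to an elementary spectral/Fourier computation (the $L^2$ one) resp. a duality reduction plus a tent-space integral divergence (the $T^{\infty,2}$ one); (2) present the explicit family of test functions and compute the two norms; (3) conclude by the scaling limit $\lambda\to 0$ in the first case and by a single divergent example in the second. The main obstacle I anticipate is purely bookkeeping: verifying that the chosen test functions genuinely lie in the source spaces with \emph{bounded} (or finite) norm while the target norm is infinite or unbounded — in particular getting the powers of $s$, $t$, $|\xi|$ and the dyadic scale $\lambda$ to line up under the parabolic scaling, and making sure the Leray projection symbol $m(\xi)$ is bounded below on the chosen frequency sector (which one arranges by picking $g$ supported in a fixed solid angle where $m\neq 0$, e.g. off the axes). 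Once the scaling is pinned down the divergence is transparent, since morally it is the failure of $|\xi|^{-1}$ (from $\div$) and the long-time integration to be absorbed by the half-power weight $s^{1/2}$ — a genuine obstruction, not an artifact of the proof.
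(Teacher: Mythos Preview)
Your $T^{2,2}$ argument does not work. The map $\calA_2$ is scale-invariant for the pair $(T^{2,2}_{1/2},T^{2,2})$: under $F\mapsto F(\lambda^2\,\cdot\,,\lambda\,\cdot\,)$ both $\|s^{1/2}F\|_{L^2}$ and $\|\calA_2 F\|_{L^2}$ pick up the same power of $\lambda$, so a one-parameter family of single-scale bumps $F_\lambda$ yields a ratio independent of $\lambda$ and no blow-up. Concretely, you dropped the factor $|\xi|$ coming from $\div$: with $\widehat F(s,\xi)=s^{-1/2}\Eins_{\{s\asymp\lambda^{-2}\}}g(\xi)$ and $|\xi|\asymp\lambda$ on $\supp g$, one has $\int_0^\infty e^{-s|\xi|^2}s^{-1/2}\Eins_{\{s\asymp\lambda^{-2}\}}\,ds\asymp|\xi|^{-1}$, hence
\[
\widehat{\calA_2 F}(t,\xi)\;\approx\; m(\xi)\,|\xi|\cdot|\xi|^{-1}\,e^{-t|\xi|^2}g(\xi)\;=\;m(\xi)\,e^{-t|\xi|^2}g(\xi),
\]
not $|\xi|^{-1}e^{-t|\xi|^2}g(\xi)$. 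This gives $\|\calA_2 F\|_{L^2}^2\asymp\lambda^{-2}\|g\|_2^2=\|s^{1/2}F\|_{L^2}^2$, so the claimed $|\xi|^{-4}$ divergence is an artifact of the missing factor.

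Your $T^{\infty,2}$ argument, by contrast, is correct and is essentially the paper's proof. The mechanism is not a scaling limit but a logarithmic divergence at $s\to 0$ coming from a \emph{single fixed} test function: with $h:=\calS G\neq 0$ one has $\calA_2^\ast G(s,\cdot)=e^{s\Delta}h$, and the extra weight $s^{-1/2}$ produces $\int_0^{\eps} s^{-1}\|\cdots\|^2\,ds=\infty$. The paper runs precisely this argument for \emph{both} cases, taking $G(t,\cdot)=u\,\Eins_{(1,2)}(t)$ with $u\in L^2(\R^n;\C^n)$ chosen so that $h=\nabla(-\Delta)^{-1}\Pb(e^\Delta-e^{2\Delta})u\neq 0$, and observing $\int_0^1\|e^{s\Delta}h\|_{L^2}^2\,\frac{ds}{s}=\infty$ (respectively the same over a fixed ball for the $T^{\infty,2}$ statement). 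You should simply replace your $T^{2,2}$ argument by this one: a compactly supported $G$ lies in $L^2(\R^{n+1}_+)$ as well, and the same $\int_0 s^{-1}\,ds$ divergence shows $s^{-1/2}\calA_2^\ast G\notin L^2(\R^{n+1}_+)$, which is exactly the dual statement you need.
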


We adapt the argument of  \cite{AuscherAxelsson}, Theorem 1.5.

\begin{proof}
We first show the result for $T^{2,2}$. 
We work with the dual operator $\calA_2^\ast$ defined in \eqref{def-A2-dual} and show that  
 \begin{align*}
G\mapsto   s^{-1/2}(\calA_{2}^\ast  G)(s,\,.\,) = s^{-1/2} e^{s\Delta}  \int_0^{\infty} \nabla \Pb e^{t\Delta} G(t,\,.\,) \,dt
\end{align*}
is not bounded from  $L^2(\R^{n+1}_+;\C^n)=T^{2,2}(\R^{n+1}_+;\C^n)$ to $L^2(\R^{n+1}_+;\C^n\otimes \C^n)$ .\\
There exists $u \in L^2(\R^n; \C^n)$ with $\nabla (-\Delta)^{-1/2} (-\Delta)^{-1/2}\Pb (e^{\Delta}-e^{2\Delta})u \neq 0$ in  $  L^2(\R^n;\C^n\otimes \C^n)$. Define $G(t,\,.\,)=u$ for $t \in (1,2)$, and $G(t,\,.\,)=0$ otherwise. Clearly $G\in L^{2}(\R^{n+1}_+;\C^n)$.  Then, for $s<1$,
\begin{align} \label{repr-A_2} \nonumber
	s^{-1/2}(\calA_{2}^\ast  G)(s,\,.\,)
	&= e^{s\Delta} \nabla (-\Delta)^{-1/2} (s(-\Delta))^{-1/2}  \int_1^2 (-\Delta)\Pb e^{t\Delta} u \,dt \\
	&= e^{s\Delta}\nabla (-\Delta)^{-1/2} (s(-\Delta))^{-1/2} \Pb (e^{\Delta} - e^{2\Delta}) u,
\end{align}
and
\begin{align*}
	\norm{s^{-1/2}(\calA_{2}^\ast  G)(s,\,.\,)}_{L^2(\R^{n+1}_+)}^2
		\geq \int_0^1 \norm{e^{s\Delta}\nabla (-\Delta)^{-1/2} (-\Delta)^{-1/2} \Pb (e^{\Delta} - e^{2\Delta}) u}_2^2 \,\frac{ds}{s} = \infty,
\end{align*}
as $e^{s\Delta} \to I$ for $s \to 0$. \\
For the result on $T^{\infty,2}$, we argue similarly. There is some ball $B=B(x,1)$  in $\R^n$ 
such that  $\nabla (-\Delta)^{-1/2} (-\Delta)^{-1/2}\Pb (e^{\Delta}-e^{2\Delta})u \neq 0$ in $L^2(B;\C^n\otimes \C^n)$. 
Let $G$ be defined as above.  Then $G \in T^{\infty,2}(\R^{n+1}_+;\C^n)$, since the Carleson norm of $G$ can be restricted to balls of radius larger than $1$ by definition of $G$ and
\begin{align*}
	\norm{G}_{T^{\infty,2}}^2 
	= \sup_{x_0 \in \R^n} \sup_{r>1} r^{-n/2} \int_0^r \int_{B(x_0,\sqrt{r})} \abs{G(t,x)}^2 \,dxdt 
	 \leq \int_1^2 \int_{\R^n} \abs{u(x)}^2 \,dxdt = \norm{u}_2^2.
\end{align*}
Now, using again \eqref{repr-A_2}, we get as above
\begin{align*}
 	\norm{s^{-1/2}(\calA_{2}^\ast  G)(s,\,.\,)}_{T^{\infty,2}}^2
 	\geq \int_0^1 \norm{e^{s\Delta}\nabla (-\Delta)^{-1/2} (-\Delta)^{-1/2} \Pb (e^{\Delta} - e^{2\Delta}) u}_{L^2(B)}^2 \,\frac{ds}{s} = \infty.
\end{align*}
\end{proof}

\section*{Acknowledgement}
 The  first author was partially supported by the ANR project ``Harmonic analysis at its boundaries'' ANR-12-BS01-0013-01. The second author was partially supported by the Karlsruhe House of Young Scientists (KHYS) and the Australian Research Council Discovery grants DP110102488 and DP120103692.
The second author thanks the D{\'e}partement de Math{\'e}matiques d'Orsay for their kind hospitality where this research was mostly undertaken, and would like to thank Peer Chr. Kunstmann for bringing the problem to our attention and for fruitful discussions.

\small{

}

\end{document}